\documentclass[11pt]{article}
\usepackage[margin=1in]{geometry}
\usepackage[T1]{fontenc}
\usepackage[utf8]{inputenc}
\usepackage{lmodern,amsmath,amssymb,amsthm,booktabs,microtype}
\usepackage{tikz,tabularx}
\usepackage[numbers,sort&compress]{natbib}
\usepackage[colorlinks=true,linkcolor=blue,citecolor=blue,urlcolor=blue]{hyperref}
\newtheorem{theorem}{Theorem}[section]
\newtheorem{lemma}[theorem]{Lemma}
\newtheorem{proposition}[theorem]{Proposition}
\theoremstyle{remark}
\newtheorem{remark}[theorem]{Remark}
\newtheorem{question}[theorem]{Question}
\newcommand{\RP}{\mathbb{RP}^{2}}
\newcommand{\Ftwo}{\mathbb F_{2}}
\newcommand{\ew}{\operatorname{ew}}
\newcommand{\fw}{\operatorname{fw}}

\newcommand{\diam}{\operatorname{diam}}
\title{Power-of-two cycles in projective cubic graphs\protect\\ of large edge-width}
\author{Jean-Fran\c{c}ois Culus\\[4pt]
\small MEMIAD, Universit\'e des Antilles, Schoelcher, France\\
\small CREC Saint-Cyr, Guer, France}
\date{}
\begin{document}
\maketitle

\begin{abstract}
Heckman and Krakovski proved that every $3$-connected cubic planar graph
contains a cycle of length $2^m$ for some $2\le m\le7$. We extend this to
$3$-connected cubic graphs embedded in the real projective plane with
face-width at least three and edge-width at least $48$; the cycle may
moreover be taken contractible. The argument passes to the spherical
double cover, where the planar theorem provides the cycle inside a
subgraph of diameter at most $47$, and the condition on the edge-width
guarantees that this subgraph projects injectively.
\end{abstract}

\section{Introduction}

In 1995 Erd\H{o}s and Gy\'arf\'as asked whether every graph of minimum
degree at least three contains a cycle whose length is a power of
two~\cite{Erdos1997,HK}. The question remains open, although it has been
answered for a number of classes of graphs. Shauger~\cite{Shauger1998}
settled it for $K_{1,m}$-free graphs of minimum degree at least $m+1$ or
maximum degree at least $2m-1$, and Daniel and
Shauger~\cite{DanielShauger2001} for planar claw-free graphs of minimum
degree at least three. Markstr\"om~\cite{Markstrom2004} verified it for
all cubic graphs on at most $29$ vertices, and found that the smallest
cubic planar graph with no cycle of length $4$ or $8$ has $24$ vertices.
At the other end of the spectrum, Liu and
Montgomery~\cite{liu2023solutionerdoshajnal} proved that graphs of
sufficiently large average degree contain a cycle whose length is a power
of two, so that the conjecture is really a question about sparse graphs.
In this direction, Carr~\cite{carr2026predominantlycubic} showed that at
least $4/7$ of the vertices of a minimal counterexample have degree three,
and Ducoffe and
Dumitru~\cite{ducoffe2026structuredsearcherdhosgyarfascounterexamples}
raised this proportion above $2/3$; they also verified the conjecture for
cubic graphs on at most $48$ vertices.

For cubic graphs, the planar case was settled by Heckman and
Krakovski~\cite{HK}, who proved that every $3$-connected cubic planar
graph has a cycle of length $2^m$ with $2\le m\le7$. Their proof is a
long discharging argument, parts of which were carried out by computer.
The range of exponents cannot be reduced to $2\le m\le4$: Exoo
constructed a $3$-connected cubic planar graph on $420$ vertices with no
cycle of length $4$, $8$ or $16$~\cite{exoo2014graphserdhosgyarfasconjecture}.

In their concluding remarks, Heckman and Krakovski consider replacing
planarity by projective-planarity. Discharging remains available, since
the projective plane has positive Euler characteristic, but they expect
many of their local arguments to become more delicate~\cite[Section~6]{HK}.
The purpose of this note is to show that, when the embedding is wide
enough, no new local analysis is needed: the planar theorem can be
transferred as it stands.

\paragraph{The result.}
We work with the usual model of the real projective plane: a closed disk
in which antipodal points of the boundary are identified
(Figure~\ref{fig:k6-disk}). A chord joining two antipodal points closes
up into a curve whose neighbourhood is a M\"obius band, and up to
homeomorphism every essential, that is, non-contractible, simple closed
curve is of this kind. For a graph $G$ embedded in the projective plane,
the \emph{face-width} $\fw(G)$ is the least number of points in which an
essential simple closed curve, meeting $G$ in finitely many points,
intersects $G$; passing through vertices is allowed. The
\emph{edge-width} $\ew(G)$ is the length of a shortest essential cycle of
$G$. A minimum over an empty family is $+\infty$. We refer to Mohar and
Thomassen~\cite{MoharThomassen2001} for background. All graphs are finite
and simple, and lengths and distances are measured in edges.

\begin{theorem}\label{thm:ew48}
Let $G$ be a cubic $3$-connected graph embedded in the real projective
plane with $\fw(G)\ge3$. If $\ew(G)\ge48$, then $G$ contains a
contractible cycle of length $2^m$ for some $2\le m\le7$.
\end{theorem}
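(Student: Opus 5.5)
We pass to the orientable double cover. Let $p\colon S^2\to\RP$ be the antipodal covering, and let $\tilde G=p^{-1}(G)$ be the lifted graph embedded in the sphere, with the covering involution $\sigma$ acting freely on $\tilde G$. Then $\tilde G$ is cubic and planar. The first task is to show that $\tilde G$ is $3$-connected. For this I would use the standard fact that a graph embedded with face-width at least three in a surface, and which is $3$-connected there, has a polyhedral (closed $2$-cell) embedding. Its lift to the double cover is again polyhedral, since facial walks lift to facial walks and face-width does not decrease under covering. A polyhedral embedding in the sphere is $3$-connected. I expect this to be routine, citing Mohar and Thomassen~\cite{MoharThomassen2001}.

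By Heckman and Krakovski~\cite{HK}, $\tilde G$ then contains a cycle $\tilde C$ of length $2^m$ with $2\le m\le 7$. The essential point is that this cycle can be found inside a subgraph of small diameter; the abstract says diameter at most $47$. I would argue this from the structure of the discharging proof. The contradiction there is local: a minimal counterexample must contain a reducible configuration, or some face receives negative charge. Every configuration used lies within a bounded-radius neighbourhood, and the cycle produced has length at most $128$. So there is a vertex $v$ and a radius $r$ such that $\tilde C$ lies in a ball whose diameter is at most $47$. If a short cycle of length $2^m$ is wanted regardless, note that a cycle of length $L$ has diameter at most $L/2$. That only gives $64$, however, so the sharper bound $47$ must come from the specific configurations in~\cite{HK}, and this is the step I expect to be the main obstacle. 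The first thing I would try is to check whether every case of their proof produces the cycle inside the union of a few faces of bounded size around one vertex or edge. I would then bound the diameter of that union directly.

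Suppose now that $\tilde C\subseteq H$, where $H\subseteq\tilde G$ has $\diam(H)\le 47$. I claim that $p$ is injective on $H$. If not, pick $x\ne y=\sigma(x)$ in $H$. A shortest $x$--$y$ path $P$ in $\tilde G$ has length at most $47$, and $p(P)$ is a closed walk in $G$ that lifts to a path joining antipodal points. Hence $p(P)$ is essential, that is, it is nontrivial in $\pi_1(\RP)=\Ftwo$. A closed walk of length $\le 47$ that is essential contains an essential cycle of length $\le 47$. This follows by splitting the walk at a repeated vertex: the homology classes add in $\Ftwo$, so one of the two pieces is essential, and we induct. That contradicts $\ew(G)\ge 48$. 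Therefore $p|_H$ is an isomorphism onto its image, and $C=p(\tilde C)$ is a cycle of $G$ of length $2^m$.

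Finally, $C$ is contractible: it lifts to the closed cycle $\tilde C$ in $S^2$, so its class in $\pi_1(\RP)$ is trivial. Equivalently, the covering restricted over $C$ is trivial, so $C$ is two-sided and null-homotopic. This completes the plan. The only step that is not routine is extracting the diameter-$47$ locality from~\cite{HK}. If that fails, the fallback is the weaker hypothesis $\ew(G)\ge 65$, using the bound $L/2\le 64$ for a cycle of length $L\le128$.
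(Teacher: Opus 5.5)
Your proposal has a genuine gap at exactly the step you flag: the bound $\diam(H)\le 47$ is never established, so the argument does not reach the threshold $\ew(G)\ge 48$. The route you suggest for obtaining it, namely that discharging proofs are automatically local, is not sound in general. In a minimal-counterexample argument with reducible configurations, the cycle found in the reduced graph can lie anywhere, and nothing confines it to a bounded neighbourhood of the configuration.

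The missing ingredient is that Heckman and Krakovski state the local form themselves, as their Result~1.2. Every cubic $3$-connected plane graph has a face $f$ with $|f|\le 71$ and a subgraph $H\supseteq f$ containing a cycle of length $2^m$, $2\le m\le 7$, such that every vertex of $H$ is joined to $f$ by a path in $H$ of length at most six. The diameter bound is then immediate. Go from $x$ to $f$ in at most six steps, along the shorter arc of $f$ in at most $\lfloor 71/2\rfloor=35$ steps, and out to $y$ in at most six, for at most $47$ in total. Without this input your write-up proves only your fallback statement under $\ew(G)\ge 65$, which is correct (a cycle of length at most $128$ has intrinsic diameter at most $64$) but weaker than the theorem.

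Once $H$ is available, the rest of your plan is the paper's proof. A shortest path from $x$ to $\sigma(x)$ projects to an essential closed walk. Splitting it in $H_1(\RP;\Ftwo)$ gives an essential cycle shorter than $\ew(G)$. The injective image of the lifted cycle is then a cycle of length $2^m$, and it is contractible.

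There are two smaller omissions in your first step. First, the polyhedrality criterion applies to cellular embeddings, so you must first derive cellularity from $\fw(G)\ge 1$ in $\RP$; the paper does this by an Euler characteristic count. Second, polyhedrality of the lift is not a consequence of face-width not decreasing under coverings. You need to check that each closed face lifts to two disjoint closed disks, and that lifts of distinct faces meet in nothing, a vertex, or an edge, by uniqueness of lifting. You also need $\tilde G$ to be simple and connected before concluding $3$-connectivity.
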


The proof rests on the local form of the planar theorem. The orientable
double cover of the projective plane is the sphere, and $G$ lifts to a
cubic plane graph $\widetilde G$ to which the theorem of Heckman and
Krakovski applies. What they prove is slightly stronger than the
existence of a cycle: the cycle can be found in a subgraph $H$ all of
whose vertices lie within distance six of a face of length at most $71$,
and such a subgraph has diameter at most $6+35+6=47$. Two vertices of $H$
lying over the same vertex of $G$ would be joined in $H$ by a path of
length at most $47$; this path would project to an essential closed walk,
and hence to an essential cycle shorter than the edge-width. Thus $H$
projects injectively, and the image of its power-of-two cycle is a cycle
of the same length. Most of the work lies in checking that $\widetilde G$
satisfies the hypotheses of the planar theorem, and this is where the
criterion of Mohar and Vodopivec is used. The case of small edge-width
is discussed in Section~\ref{sec:remarks}.

\begin{figure}[tbp]
\centering
\begin{tikzpicture}[x=1cm,y=1cm,line cap=round,line join=round,
  graph edge/.style={black,line width=.75pt},
  vertex/.style={circle,fill=black,inner sep=1.65pt},
  boundary point/.style={circle,draw=black,fill=white,inner sep=1.3pt},
  vertex label/.style={font=\small,fill=white,inner sep=1pt},
  boundary label/.style={font=\small}]
  \def\diskradius{2.55}
  \def\ringradius{1.24}
  \draw[gray!65,line width=.6pt] (0,0) circle (\diskradius);
  \coordinate (c) at (0,0);
  \foreach \i in {0,...,4} {
    \coordinate (v\i) at ({90+72*\i}:\ringradius);
    \coordinate (p\i) at ({72+72*\i}:\diskradius);
    \coordinate (q\i) at ({252+72*\i}:\diskradius);
  }
  \foreach \i in {0,...,4} {
    \pgfmathtruncatemacro{\j}{mod(\i+1,5)}
    \pgfmathtruncatemacro{\k}{mod(\i+2,5)}
    \draw[graph edge] (c)--(v\i)--(v\j);
    \draw[graph edge] (v\i)--(p\i);
    \draw[graph edge] (q\i)--(v\k);
  }
  \node[vertex] at (c) {};
  \node[vertex label] at (54:.28) {$0$};
  \foreach \i in {0,...,4} {
    \pgfmathtruncatemacro{\lab}{\i+1}
    \node[vertex] at (v\i) {};
    \path (v\i) ++({243+72*\i}:.36) node[vertex label] {$\lab$};
  }
  \foreach \i/\letter in {0/a,1/b,2/c,3/d,4/e} {
    \node[boundary point] at (p\i) {};
    \node[boundary point] at (q\i) {};
    \node[boundary label] at ({72+72*\i}:{\diskradius+.19}) {$\letter$};
    \node[boundary label] at ({252+72*\i}:{\diskradius+.19}) {$\letter$};
  }
\end{tikzpicture}
\caption{The complete graph $K_6$ in the disk model of the projective
plane. Boundary points with the same letter are identified; they are not
vertices. The six vertices, fifteen edges and ten triangular faces give Euler
characteristic $6-15+10=1$. The dual of this map is the Petersen graph
embedded in the projective plane.}
\label{fig:k6-disk}
\end{figure}

\section{Embeddings and the spherical lift}\label{sec:lift}

An embedding is \emph{cellular} if every face is an open disk, and
\emph{polyhedral} if moreover every facial walk is a cycle and the
closures of any two distinct faces are disjoint or meet in a single
vertex or a single edge. We use the same letter for a face and its
boundary cycle, and $|f|$ is its length.
For cellular embeddings, Mohar and Vodopivec showed that a cubic graph is
polyhedrally embedded exactly when it is $3$-connected and the face-width
is at least three~\cite[Proposition~1.1]{MV}. To apply this criterion we need cellularity; in the projective plane it
is already implied by positive face-width.

\begin{lemma}\label{lem:cellular}
Every embedding of a connected graph in $\RP$ with positive face-width is
cellular.
\end{lemma}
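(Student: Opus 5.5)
We argue by contraposition. Suppose $G$ is a connected graph embedded in $\RP$ and some face $F$ (a connected component of $\RP\setminus G$) is not an open disk. We will produce an essential simple closed curve that avoids $G$ entirely. Such a curve meets $G$ in zero points, so $\fw(G)=0$, contradicting positive face-width. (If $G$ is a single vertex, the lone face is $\RP$ minus a point, which is a M\"obius band and contains an essential curve; the general argument covers this case as well.)

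The key topological input is a classification of the face $F$. Since $G$ is a connected compact set (a finite graph), each face is an open connected subsurface of $\RP$ whose complement is connected. We would first show that every simple closed curve $\gamma$ in $F$ that is contractible in $\RP$ bounds a disk inside $F$. Such a $\gamma$ separates $\RP$ into a disk $D$ and a M\"obius band. The graph $G$ is connected and disjoint from $\gamma$, so $G$ lies entirely in one of the two sides. It cannot lie in $D$ alone, since otherwise $F$ would contain the M\"obius band side, whose core curve is essential and misses $G$ — which already gives the desired curve. So $G$ lies on the M\"obius side and $D\subseteq F$.

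Now suppose every simple closed curve in $F$ is contractible in $\RP$. By the previous step, every simple closed curve in $F$ bounds a disk in $F$. A connected open surface with this property is planar and simply connected; one can say it has genus zero and one end, or use the Jordan–Schoenflies characterization. Hence $F$ is homeomorphic to an open disk. This contradicts the choice of $F$, so $F$ must contain an essential simple closed curve disjoint from $G$, and therefore $\fw(G)=0$.

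The main obstacle is the step ``every simple closed curve bounds a disk in $F$ $\Rightarrow$ $F$ is an open disk.'' This is where we rely on the classification of noncompact surfaces, or equivalently on the fact that a simply connected open surface is homeomorphic to $\mathbb R^2$. There is a more combinatorial route we would try first. Triangulate so that $G$ is a subcomplex, and let $N$ be a small regular neighbourhood of $G$. Then $F$ deformation-retracts onto a compact surface $\overline F\setminus N$ with boundary, and $\chi(\RP)=1$ together with the connectivity of $G$ forces every non-disk face to contain a M\"obius band. Explicitly, the components of $\RP\setminus N$ have Euler characteristics summing to $1-\chi(N)\ge 1-1=0$ more than their number, which is only possible if a component is a disk or contains a crosscap. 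Making this Euler characteristic bookkeeping precise, including the fact that $\chi(N)=\chi(G)\le 1$, is the part that needs care.
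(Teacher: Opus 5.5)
Your main argument is correct, but it takes a different route from the paper. You work with curves inside a non-disk face $F$. An essential simple closed curve in $F$ gives $\fw(G)=0$ directly. A contractible one bounds a disk in $\RP$ with M\"obius-band complement, and since $G$ is connected, either that M\"obius band lies in $F$ or the disk does. So if $\fw(G)>0$, every simple closed curve in $F$ bounds a disk in $F$, and a recognition theorem makes $F$ an open disk.

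The paper avoids this curve analysis. It takes a connected regular neighbourhood $N$ of $G$ with $b$ boundary curves and complementary pieces $M_1,\dots,M_r$, so $b\ge r$. Writing each $\chi$ as $2-e(S)$ minus the number of boundary curves turns $\chi(\RP)=1$ into $e(N)+\sum_i e(M_i)+2(b-r)=1$. Hence each $M_i$ is a disk or a M\"obius band, and positive face-width excludes the latter.

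What each route buys:
\begin{itemize}
\item The paper's count uses only the classification of compact surfaces and gives more for free: whatever the face-width, at most one face is not a disk, and that face is a M\"obius band. It is, however, specific to $\chi(\RP)=1$.
\item Your route needs heavier inputs: the Schoenflies-type separation theorem in $\RP$ and the Ker\'ekj\'art\'o--Richards classification of noncompact surfaces. The latter can be replaced by the compact classification, since $F$ is homeomorphic to the interior of the compact piece $M_i$. In exchange, your argument adapts readily to every surface other than the sphere, where the complement of an embedded disk still carries an essential curve.
\end{itemize}

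Two small caveats. First, your ``equivalently'' is not an equivalence: getting from ``every simple closed curve bounds a disk'' to $\pi_1(F)=1$ is itself a piece of surface topology, so cite the classification route instead. Second, your fallback Euler count fails as written. The bound $\chi(N)=\chi(G)\le1$ only yields $\sum_i\chi(M_i)\ge0$, which is consistent with one annular face and all others disks. Excluding this requires $\chi(N)=2-e(N)-b$ together with $b\ge r$, which is exactly the paper's proof.
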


The proof, a short Euler characteristic count, is in
Appendix~\ref{app:topology}. Let $p\colon S^2\to\RP$ be the double cover,
$\tau$ its deck involution, and $\widetilde G=p^{-1}(G)$.

\begin{proposition}\label{prop:lift}
If $G$ is cubic and polyhedrally embedded in $\RP$, then $\widetilde G$
is a connected cubic graph, polyhedrally embedded in $S^2$. In particular
$\widetilde G$ is $3$-connected and planar.
\end{proposition}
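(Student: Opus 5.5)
We first describe $\widetilde G$ locally. Since $p$ is a covering map and $G$ is a graph in $\RP$, its preimage $\widetilde G$ is a graph embedded in $S^2$. Each vertex of $G$ has two preimages, and each edge of $G$ (a closed arc, hence simply connected) has two lifts. Near every point $p$ is a homeomorphism, so each lifted vertex has degree $3$. Two lifts of the same edge cannot share both ends in a way that creates a loop or a parallel edge, because lifts of the distinct edges around a vertex $\tilde v$ correspond bijectively to the edges at $p(\tilde v)$ and $G$ is simple. Hence $\widetilde G$ is cubic and simple.

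\emph{Connectivity.} $G$ is connected, being polyhedrally embedded (and in fact $3$-connected by the Mohar--Vodopivec criterion). Its embedding is cellular, since polyhedral embeddings are cellular by definition, or alternatively by Lemma~\ref{lem:cellular}. A cellularly embedded graph in $\RP$ contains an essential closed walk: otherwise every closed walk lifts to a closed walk, so $G$ lifts to two disjoint copies, and the union of closed faces would carry no essential loop. That is impossible, because $\pi_1(\RP)$ is generated by loops in the $1$-skeleton of a cell decomposition. A walk in $G$ from $v$ to $v$ that is essential lifts to a path joining the two preimages of $v$. Since $G$ is connected, every vertex of $\widetilde G$ is joined to one of the two preimages of some fixed $v$, and these two are joined to each other, so $\widetilde G$ is connected.

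\emph{Faces.} Each face $f$ of $G$ is an open disk, hence simply connected, so $p^{-1}(f)$ is the disjoint union of two open disks, each mapped homeomorphically onto $f$. These disks are exactly the faces of $\widetilde G$, so the lifted embedding is cellular. The facial walk of a lifted face $\tilde f$ is a lift of the facial walk of $f$. Since that walk is a cycle, and the lift of a cycle $C$ is either a cycle of the same length or a closed walk covering $C$ twice, the lift bounding a single disk lifted homeomorphically must be a cycle of length $|f|$. Alternatively, the closure of $\tilde f$ maps onto the closure of $f$, and injectivity on the boundary follows because the boundary of $f$ is a cycle bounding a disk, which is contractible and so lifts to a closed cycle.

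\emph{Polyhedral intersection condition.} This is the main obstacle. Let $\tilde f_1\ne\tilde f_2$ be two faces of $\widetilde G$. If $p(\tilde f_1)\ne p(\tilde f_2)$, then $p$ maps $\overline{\tilde f_1}\cap\overline{\tilde f_2}$ into $\overline{f_1}\cap\overline{f_2}$, which is empty, a vertex, or an edge. The map is injective on $\overline{\tilde f_1}$, so the intersection is contained in the lift of a vertex or an edge lying on $\tilde f_1$, and it is therefore empty, a vertex, or an edge.

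The remaining case is $\tilde f_2=\tau\tilde f_1$. Suppose the two lifts of $f$ share a point $\tilde x$ on their boundaries. Then $\tilde x$ and $\tau\tilde x$ both lie on $\partial\tilde f_1$, and both map to the same point $x$ of $\partial f$. This contradicts injectivity of $p$ on $\overline{\tilde f_1}$, unless $\tilde x$ and $\tau\tilde x$ coincide, which is impossible since $\tau$ is fixed-point free. More carefully: $\tilde x\in\partial\tilde f_1\cap\partial(\tau\tilde f_1)$ gives $\tau\tilde x\in\partial(\tau\tilde f_1)\cap\partial\tilde f_1$. So $\tilde x$ and $\tau\tilde x$ are two distinct points of $\partial\tilde f_1$ with the same image, a contradiction. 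Hence the two lifts of a face have disjoint closures.

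It follows that $\widetilde G$ is polyhedrally embedded in $S^2$. Such a graph is $3$-connected, either by the Mohar--Vodopivec criterion on the sphere or by the classical fact that polyhedral embeddings force $3$-connectivity, and it is planar.
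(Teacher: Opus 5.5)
There is one genuine gap, in your case $f_1=p(\tilde f_1)\ne f_2=p(\tilde f_2)$ with $\overline{f_1}\cap\overline{f_2}$ an edge $e=uv$.

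Injectivity of $p$ on $\overline{\tilde f_1}$ only places $\overline{\tilde f_1}\cap\overline{\tilde f_2}$ inside the lift $\tilde e$ of $e$ lying on $\partial\tilde f_1$, with ends $\tilde u,\tilde v$. A subgraph of the closed edge $\tilde e$ has five possible forms, not three. Besides the empty set, a single end, and $\tilde e$ itself, it could be the two ends $\{\tilde u,\tilde v\}$ without the edge. That configuration, two faces meeting in two vertices, is exactly what polyhedrality forbids. Your ``therefore'' skips it. Injectivity on $\overline{\tilde f_1}$ alone cannot exclude it, because it says nothing about which lift of $e$ lies on $\partial\tilde f_2$.

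The fix is to use injectivity on the other closure as well. Since $e\subseteq\partial f_2$, some lift $\tilde e'$ of $e$ lies on $\partial\tilde f_2$, and it has an end in $\overline{\tilde f_2}$ over $u$. Suppose $\tilde u\in\overline{\tilde f_2}$. Then injectivity of $p$ on $\overline{\tilde f_2}$ forces that end to be $\tilde u$. The two lifts of $e$ are disjoint arcs, each through one preimage of $u$, so $\tilde e'=\tilde e$. Hence the intersection is either empty or the whole edge $\tilde e$. This is what the paper means by ``by uniqueness of lifting their intersection is empty, a vertex, or a whole edge.''

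With this step added, your proof is complete. The rest differs from the paper only in inessential ways:
\begin{itemize}
\item For connectivity, you use surjectivity of $\pi_1(G)\to\pi_1(\RP)$ for a cellular embedding. The paper argues more elementarily that $S^2$ is connected and is covered by closed faces with connected boundaries.
\item For disjointness of the two lifts of the same face, you use the $\tau$-symmetry argument. The paper reads it off directly from the preimage of the closed disk $\overline f$ being two disjoint closed disks.
\end{itemize}
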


Again the proof is in Appendix~\ref{app:topology}. Note that we obtain
the $3$-connectivity of $\widetilde G$ from its embedding, through the
criterion of Mohar and Vodopivec, rather than from general facts about
coverings of graphs.

\section{Proof of Theorem~\ref{thm:ew48}}\label{sec:proof}

We use the following local form of the planar theorem.

\begin{theorem}[Heckman and Krakovski {\cite[Result~1.2]{HK}}]\label{thm:HK}
Every cubic $3$-connected plane graph has a face $f$ with $|f|\le71$ and a
subgraph $H\supseteq f$ such that every vertex of $H$ is joined to $f$ by a
path in $H$ of length at most six, and $H$ contains a cycle of length
$2^m$ for some $2\le m\le7$.
\end{theorem}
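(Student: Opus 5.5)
Theorem~\ref{thm:HK} is not proved here but quoted from \cite[Result~1.2]{HK}. The plan is to check that the statement above is exactly what the Heckman--Krakovski discharging argument delivers, and that their conventions agree with ours. The argument I would reconstruct runs as follows.

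\textbf{Discharging set-up.} Let $G$ be a cubic $3$-connected plane graph. Give each face $f$ the charge $|f|-6$. From $2|E|=3|V|$ and Euler's formula $|V|-|E|+|F|=2$ we get $\sum_f(|f|-6)=2|E|-6|F|=-12$, so the total charge is negative. Suppose, for a contradiction, that no face $f$ with $|f|\le71$ admits a subgraph $H\supseteq f$ as in the statement. Their rules move charge only from large faces to small ones, and only along short paths. The point to establish is that every face $f$ of length at most $71$ (in particular every face of length less than $6$, which starts with negative charge) must end with nonnegative charge. If it did not, the local neighbourhood of $f$ would be one of finitely many configurations. Each of these configurations contains a cycle of length $4,8,16,32,64$ or $128$ made of vertices joined to $f$ by paths of length at most six inside the configuration. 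Faces longer than $71$ only give charge away, and the rules are designed so that they keep a nonnegative amount. Hence the final total is nonnegative, which contradicts $-12$.

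\textbf{Extracting the local form.} The only thing to verify beyond the bare existence statement is locality: every configuration used in \cite{HK} is rooted at a face of length at most $71$ and lies in its distance-six neighbourhood, with distances measured inside the configuration itself. If that holds, the configuration, together with $f$, is the required $H$. The bound $71$ is presumably where the rules stop applying: a face of length at least $72$ has charge at least $66$, which is enough to pay everything it sends.

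\textbf{Main obstacle.} The main obstacle is this audit of locality across the whole, partly computer-assisted, case analysis. The statement of Result~1.2 in \cite{HK} asserts it directly, so I would rely on their formulation rather than redo the analysis. I would only check that their notion of distance is a path length inside $H$ and that $f\subseteq H$, since these are the properties used later to bound $\diam H\le 6+35+6$.
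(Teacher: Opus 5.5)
Your approach matches the paper's: it also quotes Theorem~\ref{thm:HK} from \cite[Result~1.2]{HK} without proof, and checks only the convention you flag, noting that Heckman and Krakovski measure paths by their number of vertices, so their paths have at most five edges and the bound of six edges holds under either reading. Your discharging reconstruction plays no role in the argument; since it is not checked against the actual rules of \cite{HK}, it should be regarded as heuristic only.
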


Heckman and Krakovski measure the length of a path by its number of
vertices~\cite[p.~2]{HK}. With this convention their paths have at most
five edges, the bound of Lemma~\ref{lem:diameter} becomes $45$, and
Theorem~\ref{thm:ew48} holds as soon as $\ew(G)\ge46$. We use the bound of
six edges, which is valid under either reading.

\begin{lemma}\label{lem:diameter}
For $f$ and $H$ as in Theorem~\ref{thm:HK}, $\diam(H)\le 47$.
\end{lemma}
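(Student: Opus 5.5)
The plan is to bound the distance in $H$ between two arbitrary vertices $u,v\in V(H)$ by routing through the face $f$. By Theorem~\ref{thm:HK}, $u$ is joined to some vertex $a\in V(f)$ by a path $P_u\subseteq H$ with at most six edges. Likewise $v$ is joined to some $b\in V(f)$ by a path $P_v\subseteq H$ with at most six edges. Since $f\subseteq H$ and, as recalled in Section~\ref{sec:lift}, the boundary of a face of a polyhedral (in particular, $3$-connected cubic plane) embedding is a cycle, $a$ and $b$ are joined along $f$ by one of the two arcs of this cycle. The shorter arc has at most $\lfloor |f|/2\rfloor\le\lfloor 71/2\rfloor=35$ edges. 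Concatenating $P_u$, this arc and $P_v$ gives a $u$--$v$ walk in $H$ of length at most $6+35+6=47$, and a walk contains a path of no greater length. Hence $d_H(u,v)\le 47$ for all $u,v$, so $\diam(H)\le47$; in particular $H$ is connected.

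The only point needing care is that $f$ really is a cycle in $H$, so that the arc bound $\lfloor|f|/2\rfloor$ applies. In a $3$-connected plane graph every facial walk is a cycle (Whitney; equivalently, the embedding is polyhedral). The notation of the paper, which identifies a face with its boundary cycle and writes $|f|$ for its length, already presupposes this. Even without it, a closed facial walk of length $|f|$ through $a$ and $b$ splits into two $a$--$b$ walks whose lengths sum to $|f|$, so one of them has length at most $35$, and the same bound follows. I expect no real obstacle here. The one thing to state explicitly is that the path lengths in Theorem~\ref{thm:HK} are counted in edges, which the paper fixes as its convention, with six as an upper bound under either reading.
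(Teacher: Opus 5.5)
Your proposal is correct and follows the paper's proof: you join both vertices to $f$ by paths in $H$ of length at most six, connect their endpoints by the shorter arc of $f$ (at most $\lfloor 71/2\rfloor=35$ edges, which lies in $H$ because $f\subseteq H$), and obtain a walk of length at most $47$. Your added remark that the bound survives even if $f$ were only a closed walk is a harmless safeguard that the paper does not need.
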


\begin{proof}
Join $x,y\in V(H)$ to vertices $x',y'$ of $f$ by paths in $H$ of length at
most six, and $x'$ to $y'$ by the shorter arc of $f$, of length at most
$\lfloor 71/2\rfloor=35$. This gives a walk in $H$ of length at most $47$
from $x$ to $y$.
\end{proof}

We shall use that the fundamental group of the projective plane has
order two, and that the natural map $\pi_1(\RP)\to H_1(\RP;\Ftwo)\cong\Ftwo$
is an isomorphism. In particular, a closed walk in $G$ lifts to a closed
walk in $\widetilde G$ if and only if it is contractible; otherwise its
lift joins the two preimages of its initial vertex.

\begin{lemma}\label{lem:walk}
An essential closed walk in a graph embedded in $\RP$ contains an
essential cycle no longer than the walk.
\end{lemma}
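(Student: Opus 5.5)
We argue by strong induction on the length of the walk, using the $\Ftwo$-homology description just recalled. Because $\pi_1(\RP)\to H_1(\RP;\Ftwo)\cong\Ftwo$ is an isomorphism, a closed walk is essential exactly when its homology class is the nonzero element. This class is additive under concatenation of closed walks at a common vertex, and more generally under splitting a closed walk at a repeated vertex. Cycles are the base case: an essential closed walk that is already a cycle is its own witness.

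For the inductive step, let $W=v_0e_1v_1\cdots e_kv_k$ be an essential closed walk with $v_k=v_0$ that is not a cycle. We split into two cases.

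\begin{itemize}
\item If $W$ repeats a vertex, say $v_i=v_j$ with $0\le i<j\le k$ and $(i,j)\neq(0,k)$, split $W$ into the closed walks $W_1=v_i\cdots v_j$ and $W_2=v_j\cdots v_k v_1\cdots v_i$. Each is strictly shorter than $W$, and $[W]=[W_1]+[W_2]$ in $H_1(\RP;\Ftwo)$. Since $[W]\neq0$, at least one $W_t$ is essential.
\item If $W$ repeats no vertex but is not a cycle, it is degenerate: $k\le2$, so $W$ is empty or traverses one edge back and forth. Such a walk is null-homotopic, contradicting essentiality.
\end{itemize}

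In the first case, induction gives an essential cycle of length at most $|W_t|<|W|$, as required.

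The one step that needs care is the additivity of the class under splitting at a repeated vertex. I would justify it directly in $\pi_1$. Choosing $v_i$ as basepoint, the rotated walk $W$ is homotopic, as a loop based at $v_i$, to the concatenation $W_1\cdot W_2$. Rotating the basepoint changes the loop only by conjugation, and in the abelian group $\pi_1(\RP)$ conjugation does nothing, so essentiality is a property of the free loop. Hence $[W]=[W_1][W_2]$, and the product of two trivial classes is trivial.

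Alternatively, one could use the lifting criterion stated above. A closed walk is essential iff its lift starting at a preimage $\tilde v$ of its initial vertex ends at $\tau\tilde v$. If both $W_1$ and $W_2$ lifted to closed walks, concatenating the lifts would make the lift of $W$ closed, a contradiction. I expect no real obstacle here.
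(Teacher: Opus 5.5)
Your proof is correct and follows essentially the same route as the paper: cut the walk at a repeated vertex, use additivity of the class in $H_1(\RP;\Ftwo)$ (equivalently $\pi_1(\RP)$), and discard the null-homotopic back-and-forth pieces. The only difference is organizational: you run a strong induction that keeps one essential piece at each split, whereas the paper decomposes the whole walk at once. Your degenerate case ($k\le 2$ forces a trivial walk or an edge traversed back and forth) relies on the graph being simple, which the paper assumes throughout.
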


\begin{proof}
Repeatedly cut the walk at repeated vertices. The pieces are cycles and edges
traversed back and forth; their lengths add up to the length of the walk,
and their classes in $H_1(\RP;\Ftwo)$ add up to the class of the walk,
which is nonzero. The back-and-forth pieces are trivial, so one of the
cycles is essential.
\end{proof}

\begin{lemma}\label{lem:injective}
If $T$ is a connected subgraph of $\widetilde G$ with $\diam(T)<\ew(G)$,
then $p$ is injective on $V(T)$.
\end{lemma}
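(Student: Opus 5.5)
We argue by contradiction. Suppose $\tilde x\neq\tilde y$ are vertices of $T$ with $p(\tilde x)=p(\tilde y)=x$. Since $p$ is a two-sheeted cover with deck involution $\tau$, we have $\tilde y=\tau(\tilde x)$. Because $T$ is connected, there is a path $\tilde P$ in $T$ from $\tilde x$ to $\tilde y$ of length at most $\diam(T)<\ew(G)$. Here $\diam(T)$ is the diameter of $T$ measured in $T$ itself.

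The projection $W=p(\tilde P)$ is a walk in $G$ of the same length, and since $p(\tilde x)=p(\tilde y)=x$ it is a closed walk at $x$. Its unique lift starting at $\tilde x$ is $\tilde P$, which ends at $\tau(\tilde x)\neq\tilde x$, so this lift is not closed. By the discussion preceding Lemma~\ref{lem:walk}, a closed walk in $G$ lifts to a closed walk exactly when it is contractible. Hence $W$ is essential.

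Lemma~\ref{lem:walk} then gives an essential cycle of $G$ of length at most $|W|=|\tilde P|<\ew(G)$. This contradicts the definition of $\ew(G)$ as the length of a shortest essential cycle. The case $\ew(G)=+\infty$ is immediate, since then there is no essential cycle at all, while the argument above would produce one.

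The only point needing care is the lifting step: the lift of $W$ from $\tilde x$ must be $\tilde P$ itself. This is uniqueness of path lifting for the covering restricted to the graph, since $\widetilde G=p^{-1}(G)\to G$ is a covering map. It matches the stated fact that a non-contractible closed walk lifts to a path joining the two preimages of its initial vertex. A degenerate $W$, such as a walk going back and forth along an edge, causes no trouble: such a walk is contractible and so lifts to a closed walk, which $\tilde P$ is not.
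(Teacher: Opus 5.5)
Your proof is correct and is essentially the paper's argument: you project a path in $T$ from $\tilde x$ to $\tau(\tilde x)$ to a closed walk whose lift is not closed, conclude that the walk is essential, and apply Lemma~\ref{lem:walk} to obtain an essential cycle shorter than $\ew(G)$. Your extra remarks on uniqueness of lifting, on degenerate walks and on the case $\ew(G)=+\infty$ are sound additions to that same argument.
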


\begin{proof}
Suppose $x\ne y$ in $V(T)$ have the same image, so that $y=\tau(x)$. A
shortest path from $x$ to $y$ in $T$ projects to a closed walk in $G$ of
length at most $\diam(T)$ whose lift is not closed. This walk is
essential, and Lemma~\ref{lem:walk} yields an essential cycle shorter
than $\ew(G)$, which is absurd.
\end{proof}

\begin{proof}[Proof of Theorem~\ref{thm:ew48}]
By Lemma~\ref{lem:cellular} the embedding is cellular, so it is
polyhedral by the criterion of Mohar and Vodopivec, and by
Proposition~\ref{prop:lift} the lift $\widetilde G$ is a cubic
$3$-connected plane graph. Theorem~\ref{thm:HK} gives $f$, $H$ and a cycle
$D\subseteq H$ of length $2^m$ with $2\le m\le7$. Since
$\diam(H)\le47<\ew(G)$, Lemma~\ref{lem:injective} shows that $p$ is
injective on $V(H)$, so $p(D)$ is a cycle of length $2^m$ in $G$. It is
contractible, since $D$ is contractible in $S^2$ and $p$ maps a
null-homotopy of $D$ to one of $p(D)$.
\end{proof}

The argument only uses the two constants of Theorem~\ref{thm:HK}: a face
of length at most $F$ and paths of length at most $\rho$ give the
threshold $2\rho+\lfloor F/2\rfloor+1$.

\section{Remarks and the case of small edge-width}\label{sec:remarks}

\paragraph{Two cautionary examples.}
An arbitrary cycle of the lift need not project to a cycle. The Petersen
graph embeds in the projective plane with the dodecahedron as its lift,
and an essential pentagon lifts to a $10$-cycle that winds twice around
it. Separation arguments call for similar care. In the hemi-cube, the
quotient of the cube by the antipodal map, the graph is $K_4$ with faces
$ABCD$, $ABDC$ and $ACBD$; the cycle $ABCD$ bounds a disk, and yet the
disjoint edges $AC$ and $BD$ join alternate vertices of it, both running
through the M\"obius-band side. (This embedding is not polyhedral, and
$K_4$ contains a $4$-cycle in any case.) Neither phenomenon arises in our
argument: the planar theorem is applied in the lift, and
Lemma~\ref{lem:injective} ensures that the local witness projects
injectively.

\paragraph{Small edge-width.}
Below the threshold, we do not know the answer to the following question.

\begin{question}\label{q:small-width}
Does every cubic $3$-connected graph embedded in $\RP$ with face-width at
least three contain a cycle of length $2^m$ for some $2\le m\le7$?
\end{question}

By Theorem~\ref{thm:ew48}, a counterexample has a shortest essential
cycle $C_0$ of length at most $47$, and every planar witness in the lift
meets $p^{-1}(C_0)$. Write $B_r(X)$ for the set of vertices of $G$ at
distance at most $r$ from a vertex set $X$.

\begin{remark}\label{rem:localization}
Let $G$ be a counterexample to Question~\ref{q:small-width}, and let
$f$, $H$ be given by Theorem~\ref{thm:HK} in the lift. Then $H$ meets
$p^{-1}(C_0)$, every vertex of $p(H)$ lies in $B_{47}(C_0)$, every vertex
of $p(f)$ lies in $B_{41}(C_0)$, and some vertex of $p(f)$ is at distance
at most six from $C_0$. Indeed, the complement of $C_0$ is an open disk
whose preimage consists of two disjoint disks, each mapped
homeomorphically onto it; if $H$ avoided $p^{-1}(C_0)$, it would lie in
one of them and project injectively, together with its power-of-two
cycle. Choose a vertex $z$ of $H$ in $p^{-1}(C_0)$. Every vertex of $H$ is
within distance $47$ of $z$, and a path of length at most six joins $z$
to $f$; adding an arc of $f$ of length at most $35$ gives the remaining
bounds.
\end{remark}

This localizes the planar witnesses, but gives no bound on the order of
a counterexample.

\section*{Use of AI tools}

This research was carried out with the help of AI tools. The research
sessions were run with the MerLean
harness~\cite{ren2026merleanagenticframeworkautoformalization,li2026merleanproverrecursiveloopingharness}
in OpenAI Codex, calling external models from DeepSeek and Google;
Claude (Anthropic) was used to adapt the harness and to check results.
The extension lemma and the
sumset inclusion for the twelve profiles listed in
Appendix~\ref{app:arithmetic}
were formalized in Lean~4; the main theorem has not been formally
verified. The author takes full responsibility for the content of this
note.

\appendix
\section{A sumset lemma}\label{app:arithmetic}

The proof of \cite[Lemma~4.3]{HK} uses a sumset lemma \cite[Lemma~4.2]{HK}
for the sets of length increments attached to clusters in
\cite[Lemma~4.1]{HK}. For completeness we record, with a self-contained
proof, the form in which it is applied to these sets.

Intervals are integer intervals, $[a,b]=\{a,\ldots,b\}$, and
$A+B=\{a+b:a\in A,\ b\in B\}$. A set $A\subseteq[0,M]$ has \emph{isolated
holes} if no two consecutive integers of $[0,M]$ are missing from $A$.

\begin{lemma}\label{lem:extension}
Let $B\subseteq[0,M]$ contain $0$ and $M$ and have isolated holes. If
$[0,K]\subseteq A$ with $K\ge1$, then $[0,K+M]\subseteq A+B$.
\end{lemma}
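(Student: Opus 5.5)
We fix $n\in[0,K+M]$ and exhibit $a\in A$ and $b\in B$ with $a+b=n$, splitting into two cases according to the position of $n$.

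\emph{Case $n\le K$.} Write $n=n+0$. Here $n\in[0,K]\subseteq A$ and $0\in B$, so nothing more is needed.

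\emph{Case $K<n\le K+M$.} We look for $b\in B$ with $n-K\le b\le n$. The constraint $b\le n$ gives $a=n-b\ge 0$, and $b\ge n-K$ gives $a\le K$, so $a\in[0,K]\subseteq A$. It is therefore enough to find an element of $B$ in the window $W=[n-K,\min(n,M)]$.

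The lower end $n-K$ lies in $[1,M]$. If $n-K\in B$ we are done. Otherwise $n-K$ is a hole of $B$. Because $B$ has isolated holes, the next integer $n-K+1$ then belongs to $B$, provided it is at most $M$; and if $n-K=M$, this is impossible anyway, since $M\in B$. So $n-K<M$ and $n-K+1\le M$.

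It remains to check that $n-K+1\le n$, which is where the hypothesis $K\ge1$ enters. Then $n-K+1\in W$, and we take $a=K-1\ge 0$, which lies in $A$.

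The only delicate point is the boundary bookkeeping: ensuring that the window $W$ has length at least two whenever its lower end is a hole. This relies on $K\ge1$ together with $M\in B$. Nothing deeper is involved, since the argument is just a sliding-window choice.
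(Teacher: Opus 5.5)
Your proof is correct and is essentially the paper's argument in mirror image. The paper splits at $M$ rather than $K$: it writes $d\le M$ as $0+d$ or $1+(d-1)$, since $d$ or $d-1$ lies in $B$, and $d>M$ as $(d-M)+M$, whereas you use $0\in B$ for $n\le K$ and pair $K$ or $K-1$ in $A$ with $n-K$ or $n-K+1$ in $B$ otherwise. In both, the point is that one of two consecutive integers lies in $B$ and is matched with one of two consecutive elements of $[0,K]$, which is where $K\ge1$ is used.
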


\begin{proof}
Let $0\le d\le K+M$. If $d\le M$, then $d\in B$ or $d-1\in B$, and
$d=0+d$ or $d=1+(d-1)$. If $d>M$, then $d=(d-M)+M$ with $1\le d-M\le K$.
\end{proof}

\begin{proposition}\label{prop:sumset}
Let $t\ge2$ and let $A_i\subseteq[0,M_i]$, $1\le i\le t$, have isolated
holes and contain $0,1,M_i-1,M_i$, where $M_i\ge1$. Then
$A_1+\cdots+A_t=[0,M_1+\cdots+M_t]$.
\end{proposition}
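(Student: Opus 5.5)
The plan is to prove the case $t=2$ directly from Lemma~\ref{lem:extension}, using the lemma twice: once as stated and once after reflecting both sets. Then I induct on $t$, again through Lemma~\ref{lem:extension}. The inclusion $A_1+\cdots+A_t\subseteq[0,M_1+\cdots+M_t]$ is trivial, so only the reverse inclusion needs proof.

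\emph{Case $t=2$.} By symmetry I assume $M_1\le M_2$.
\begin{itemize}
\item Since $0,1\in A_1$, we have $[0,1]\subseteq A_1$, so $K=1$ is allowed. The set $B=A_2$ contains $0$ and $M_2$ and has isolated holes. Lemma~\ref{lem:extension} gives $[0,M_2+1]\subseteq A_1+A_2$.
\item For the upper part, consider the reflected sets $A_i'=\{M_i-a: a\in A_i\}\subseteq[0,M_i]$. Reflection $x\mapsto M_i-x$ maps consecutive integers to consecutive integers, so $A_i'$ again has isolated holes. It also contains $0,1,M_i-1,M_i$.
\item Applying the same argument to $A_1',A_2'$ gives $[0,M_2+1]\subseteq A_1'+A_2'$. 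Translating back, $[M_1-1,\,M_1+M_2]\subseteq A_1+A_2$.
\item These two intervals cover $[0,M_1+M_2]$ provided $M_1-1\le M_2+2$. This holds because $M_1\le M_2$.
\end{itemize}
The degenerate case $M_i=1$, where $A_i=\{0,1\}$, needs no separate treatment.

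\emph{Induction.} Suppose $S=A_1+\cdots+A_k=[0,N]$ with $k\ge2$ and $N=M_1+\cdots+M_k\ge2$. Apply Lemma~\ref{lem:extension} with $A=S$, $K=N\ge1$ and $B=A_{k+1}$. This is valid because $B$ contains $0$ and $M_{k+1}$ and has isolated holes. It gives $[0,N+M_{k+1}]\subseteq S+A_{k+1}$, and the reverse inclusion is trivial.

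The only delicate point is the base case. A single application of Lemma~\ref{lem:extension} only reaches $M_2+1$, which falls short of $M_1+M_2$ when both sets are large. The reflection trick, together with the choice of the larger $M_i$ as $B$, closes that gap. This is also where the hypotheses $M_i-1\in A_i$ are used.
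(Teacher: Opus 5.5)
Your proof is correct and is essentially the paper's argument: with $M_1\le M_2$, the paper gets $[0,M_2+1]$ from $\{0,1\}+A_2$ and $[M_1-1,M_1+M_2]$ from $\{M_1-1,M_1\}+A_2=(M_1-1)+[0,M_2+1]$, notes that the two intervals overlap, and extends to $t>2$ with Lemma~\ref{lem:extension} exactly as you do. Your reflection reaches the upper interval correctly, but the simple translation by $M_1-1$ gives it directly, so reflecting $A_2$ is not needed.
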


\begin{proof}
After interchanging $A_1$ and $A_2$ if necessary, write $A=A_1$,
$B=A_2$ and $M=M_1\le N=M_2$. Since $B$ contains $0$ and
$N$ and has isolated holes, $B+\{0,1\}=[0,N+1]$. As $A$ contains $0,1$
and $M-1,M$, the sum $A+B$ contains $[0,N+1]$ and $[M-1,M+N]$, which
overlap; so $A+B=[0,M+N]$. Adding the other sets one by one with
Lemma~\ref{lem:extension} gives one inclusion, and the maxima give the
other.
\end{proof}

The twelve increment sets of \cite[Lemma~4.1]{HK} for the cluster types
in the family $\mathfrak I_F$ are
\[
\begin{gathered}
{}[0,1],\ [0,3],\ [0,4],\ [0,4]\setminus\{2\},\ [0,5]\setminus\{2\},\
[0,6],\ [0,6]\setminus\{2\},\\
[0,7]\setminus\{2,5\},\ [0,8]\setminus\{5\},\ [0,9]\setminus\{2\},\
[0,10],\ [0,14],
\end{gathered}
\]
and each satisfies the hypotheses of Proposition~\ref{prop:sumset}. Hence
any sum of at least two of them, repetitions allowed, is a full interval;
this is the statement needed in the proof of \cite[Lemma~4.3]{HK}.

\section{Topological details}\label{app:topology}

\begin{proof}[Proof of Lemma~\ref{lem:cellular}]
Let $N$ be a connected closed regular neighbourhood of the graph, with $b$ boundary
curves, and let $M_1,\ldots,M_r$ be the closures of the components of
$\RP\setminus N$, so that $b\ge r$. Write $e(S)$ for the Euler genus of a
compact connected surface $S$ (twice the genus if $S$ is orientable, the number of
crosscaps otherwise); then $\chi(S)=2-e(S)$ minus the number of boundary
curves of $S$. Since $\chi(\RP)=1$ and $\chi$ is additive along circles,
\[
 e(N)+\sum_{i=1}^r e(M_i)+2(b-r)=1 .
\]
Hence $b=r$, each $M_i$ has one boundary curve, and exactly one of the
surfaces has Euler genus one, all others zero. If it were some $M_i$, it
would be a M\"obius band whose core is an essential curve missing the
graph, contradicting positive face-width. So every $M_i$ is a disk, which
is cellularity.
\end{proof}

\begin{proof}[Proof of Proposition~\ref{prop:lift}]
The closure of each face of $G$ is a closed disk, so its preimage is two
disjoint closed disks mapped homeomorphically onto it; these are the
closures of the faces of $\widetilde G$, and their boundaries are cycles.
Two lifts of the same face are disjoint. Two lifts of different faces meet
inside the preimage of the intersection of their closures, and by uniqueness of lifting their
intersection is empty, a vertex, or a whole edge. So the lifted
embedding is polyhedral.

The lift is cubic since $p$ is a local homeomorphism. It has no loops,
and two edges with the same ends would project onto the same edge of $G$
and, by uniqueness of lifting from an endpoint, coincide; so
$\widetilde G$ is simple. It is connected, because the faces are disks
attached along connected boundaries and $S^2$ is connected. The criterion
of Mohar and Vodopivec now gives $3$-connectivity, and removing a point
inside a face gives a plane embedding.
\end{proof}


\begin{thebibliography}{13}
\providecommand{\natexlab}[1]{#1}
\providecommand{\url}[1]{\texttt{#1}}
\expandafter\ifx\csname urlstyle\endcsname\relax
  \providecommand{\doi}[1]{doi: #1}\else
  \providecommand{\doi}{doi: \begingroup \urlstyle{rm}\Url}\fi

\bibitem[Carr(2026)]{carr2026predominantlycubic}
Avery Carr.
\newblock Every Minimal Counterexample to the {Erd{\H o}s--Gy{\'a}rf{\'a}s}
  Conjecture is Predominantly Cubic, 2026.
\newblock URL \url{https://arxiv.org/abs/2605.22844}.

\bibitem[Daniel and Shauger(2001)]{DanielShauger2001}
Dale Daniel and Stephen~E. Shauger.
\newblock A result on the {Erd{\H o}s--Gy{\'a}rf{\'a}s} conjecture in planar
  graphs.
\newblock \emph{Congressus Numerantium}, 153:\penalty0 129--139, 2001.

\bibitem[Ducoffe and
  Dumitru(2026)]{ducoffe2026structuredsearcherdhosgyarfascounterexamples}
Guillaume Ducoffe and Bogdan Dumitru.
\newblock Towards a more structured search for Erd\H{o}s--Gy\'arf\'as
  counter-examples, 2026.
\newblock URL \url{https://arxiv.org/abs/2609.28594}.

\bibitem[Erd{\H o}s(1997)]{Erdos1997}
Paul Erd{\H o}s.
\newblock Some old and new problems in various branches of combinatorics.
\newblock \emph{Discrete Mathematics}, 165--166:\penalty0 227--231, 1997.
\newblock \doi{10.1016/S0012-365X(96)00173-2}.
\newblock URL \url{https://doi.org/10.1016/S0012-365X(96)00173-2}.

\bibitem[Exoo(2014)]{exoo2014graphserdhosgyarfasconjecture}
Geoffrey Exoo.
\newblock Three Graphs and the Erd\H{o}s--Gy\'{a}rf\'{a}s Conjecture, 2014.
\newblock URL \url{https://arxiv.org/abs/1403.5636}.

\bibitem[Heckman and Krakovski(2013)]{HK}
Christopher~Carl Heckman and Roi Krakovski.
\newblock {Erd{\H o}s--Gy{\'a}rf{\'a}s Conjecture for Cubic Planar Graphs}.
\newblock \emph{The Electronic Journal of Combinatorics}, 20\penalty0
  (2):\penalty0 P7, 2013.
\newblock \doi{10.37236/3252}.

\bibitem[Li et~al.(2026)Li, Zhu, and
  Ren]{li2026merleanproverrecursiveloopingharness}
Jinzheng Li, Zeru Zhu, and Yuanjie Ren.
\newblock MerLean-Prover: A Recursive Looping Harness for Lean 4 Theorem
  Proving, 2026.
\newblock URL \url{https://arxiv.org/abs/2605.26959}.

\bibitem[Liu and Montgomery(2023)]{liu2023solutionerdoshajnal}
Hong Liu and Richard Montgomery.
\newblock A solution to {Erd{\H o}s} and {Hajnal}'s odd cycle problem.
\newblock \emph{Journal of the American Mathematical Society}, 36:\penalty0
  1191--1234, 2023.
\newblock \doi{10.1090/jams/1018}.

\bibitem[Markstr{\"o}m(2004)]{Markstrom2004}
Klas Markstr{\"o}m.
\newblock Extremal graphs for some problems on cycles in graphs.
\newblock \emph{Congressus Numerantium}, 171:\penalty0 179--192, 2004.

\bibitem[Mohar and Thomassen(2001)]{MoharThomassen2001}
Bojan Mohar and Carsten Thomassen.
\newblock \emph{Graphs on Surfaces}.
\newblock Johns Hopkins Studies in the Mathematical Sciences. The Johns Hopkins
  University Press, Baltimore and London, 2001.
\newblock ISBN 9780801866890.
\newblock \doi{10.56021/9780801866890}.
\newblock URL \url{https://www.press.jhu.edu/books/title/1675/graphs-surfaces}.

\bibitem[Mohar and Vodopivec(2006)]{MV}
Bojan Mohar and Andrej Vodopivec.
\newblock {On Polyhedral Embeddings of Cubic Graphs}.
\newblock \emph{Combinatorics, Probability and Computing}, 15\penalty0
  (6):\penalty0 877--893, 2006.
\newblock \doi{10.1017/S0963548306007607}.

\bibitem[Ren et~al.(2026)Ren, Li, and
  Qi]{ren2026merleanagenticframeworkautoformalization}
Yuanjie Ren, Jinzheng Li, and Yidi Qi.
\newblock MerLean: An Agentic Framework for Autoformalization in Quantum
  Computation, 2026.
\newblock URL \url{https://arxiv.org/abs/2602.16554}.

\bibitem[Shauger(1998)]{Shauger1998}
Stephen~E. Shauger.
\newblock Results on the {Erd{\H o}s--Gy{\'a}rf{\'a}s} conjecture in
  {$K_{1,m}$}-free graphs.
\newblock \emph{Congressus Numerantium}, 134:\penalty0 61--65, 1998.

\end{thebibliography}
\end{document}